\newtheorem{thm}{Theorem}
\newtheorem{cor}{Corollary}
\newtheorem{prop}{Proposition}
\theoremstyle{definition}
\newtheorem{remark}{Remark}
\newtheorem*{xx}{X}
\begin{document}
\begin{frontmatter}

\title{Weak approximation rates for integral functionals of~Markov processes}

\author[a]{\inits{Iu.}\fnm{Iurii}\snm{Ganychenko}\corref{cor1}}\email
{iurii\_ganychenko@ukr.net}
\cortext[cor1]{Corresponding author.}
\author[b]{\inits{A.}\fnm{Alexei}\snm{Kulik}}\email{kulik.alex.m@gmail.com}
\address[a]{Taras Shevchenko National University of Kyiv, Kyiv, Ukraine}
\address[b]{Institute of Mathematics, National Academy of Sciences of
Ukraine, Kyiv, Ukraine}

\markboth{Iu. Ganychenko, A. Kulik}{Weak approximation rates for
integral functionals of Markov processes}

\begin{abstract}
We obtain weak rates for approximation of an integral functional of
a~Markov process by integral sums. An assumption on the process is
formulated only in terms of its transition probability density, and,
therefore, our approach is not strongly dependent on the structure of
the process. Applications to the estimates of the rates of
approximation of the Feynman--Kac semigroup and of the price of
``occupation-time options'' are provided.
\end{abstract}

\begin{keyword}
Markov processes\sep
integral functional\sep
weak approximation rates\sep
Feynman-Kac formula\sep
occupation-time option
\MSC[2010] 60J55\sep60F17
\end{keyword}

\received{6 September 2015}
\accepted{14 September 2015}
\publishedonline{23 September 2015}
\end{frontmatter}

\section{Introduction and main results}

Let $X_t, t\geq0$, be a Markov process with values in $\mathbb{R}^d$.
We consider the following objects:

1) the integral functional
\[
I_T(h)=\int_0^Th(X_t)
\, dt
\]
of this process;

2) the sequence of integral sums
\[
I_{T,n}(h)={T\over n}\sum_{k=0}^{n-1}h(X_{(kT)/n}),
\quad n\geq1.
\]
The problem we are focused on is obtaining upper bounds on the accuracy
of approximation of the integral functional $I_{T}(h)$ by the integral
sums $I_{T,n}(h)$ without any regularity assumption on the function
$h$. The function $h$ is only assumed to be measurable and bounded.
Therefore, the class of functionals $I_T(h)$ contains, for example, the
\emph{occupation time} of the process $X$ in a set $A\subset\mathcal
{B}(\mathbb{R}^d)$ (in this case, $h=\mathbb{I}_A$).

The problem of estimating the expectation of expressions that contain
both the value of a process and the value of an integral functional of
this process arises naturally in a wide range of probabilistic
problems. Two of them related with the\break Feynman--Kac semigroup and the
price of an occupation-time option are discussed in Section~\ref{s3}.
An exact calculation of such expressions, if possible, can be performed
only under substantial assumptions on the structure of functionals and
processes; see, for example, \cite{Guerin}, where the price of an
occupation-time option is precisely calculated for a L\'{e}vy process
with only negative jumps. For more complicated models, it is natural to
use approximative methods, which naturally require estimates of
approximation errors. This motivates the main aim of the paper to
evaluate the error bounds for discrete approximations of the integral
functional $I_T(h)$.

In what follows, $P_x$ denotes the law of the Markov process $X$
conditioned by $X_0=x$, and $\mathbb{E}_x$ denotes the expectation with
respect to this law. Both the absolute value of a real number and the
Euclidean norm in $\mathbb{R}^d$ are denoted by $|\cdot|$; $\|\cdot\|$
denotes the $\sup$-norm in $L_\infty$.

The following result was obtained in \cite{Gobet} as a part of the
proof of a more general statement (see Theorem 2.5 in \cite{Gobet}).
\begin{prop}\label{tv1}
Suppose that $X$ is a multidimensional diffusion process with bounded
H\"{o}lder continuous coefficients and that its diffusion coefficient
satisfies the uniform ellipticity condition
\[
\bigl(a(x)\theta,\theta\bigr)_{\mathbb{R}^d} \geq c |\theta|^2, \quad x,
\theta\in \mathbb{R}^d, \ c > 0.
\]

Then there exists a positive constant $C$ such that
\begin{equation}
\label{lem} \big| E_x I_{T}(h) - E_x
I_{T,n}(h) \big| \leq C\|h\| \frac{\log n}{n}.
\end{equation}
\end{prop}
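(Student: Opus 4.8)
The plan is to reduce everything to pointwise estimates on the transition probability density. Write $p_t(x,y)$ for the transition density of $X$; under the stated assumptions (uniform ellipticity, bounded Hölder coefficients) the classical parametrix/Aronson bounds give two-sided Gaussian estimates
\[
p_t(x,y) \leq C t^{-d/2} \exp\biggl(-c\frac{|x-y|^2}{t}\biggr),
\]
together with a gradient-in-time bound $|\partial_t p_t(x,y)| \leq C t^{-1-d/2}\exp(-c|x-y|^2/t)$, and hence, after integrating in $y$, the key one-dimensional-in-time facts
\[
\int_{\mathbb{R}^d} p_t(x,y)\,dy = 1, \qquad \int_{\mathbb{R}^d} |p_t(x,y)-p_s(x,y)|\,dy \leq C\,\frac{|t-s|}{\min(t,s)}.
\]
The second inequality follows from the gradient bound by writing $p_t-p_s = \int_s^t \partial_u p_u\,du$ and using $\int_{\mathbb{R}^d} |\partial_u p_u(x,y)|\,dy \leq C/u$.

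First I would write the error as a sum over the mesh intervals. Setting $t_k = kT/n$,
\[
E_x I_T(h) - E_x I_{T,n}(h) = \sum_{k=0}^{n-1} \int_{t_k}^{t_{k+1}} \bigl(E_x h(X_t) - E_x h(X_{t_k})\bigr)\,dt,
\]
and by the Markov property and Chapman--Kolmogorov each term equals $\int_{t_k}^{t_{k+1}} \int_{\mathbb{R}^d}\bigl(p_t(x,y)-p_{t_k}(x,y)\bigr)h(y)\,dy\,dt$. Bounding $|h(y)|\leq\|h\|$ and pulling out the sup, the whole error is dominated by
\[
\|h\| \sum_{k=0}^{n-1} \int_{t_k}^{t_{k+1}} \int_{\mathbb{R}^d} |p_t(x,y)-p_{t_k}(x,y)|\,dy\,dt \leq C\|h\|\sum_{k=0}^{n-1} \int_{t_k}^{t_{k+1}} \frac{t-t_k}{t_k}\,dt,
\]
where the last step uses the density difference bound above (with $\min(t,t_k)=t_k$ since $t\geq t_k$).

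The remaining step is the elementary estimate of the deterministic sum. On $[t_k,t_{k+1}]$ we have $t-t_k \leq T/n$, so $\int_{t_k}^{t_{k+1}} (t-t_k)/t_k\,dt \leq (T/n)^2/t_k = (T/n)\cdot(1/k)$ for $k\geq1$; summing $\sum_{k=1}^{n-1} 1/k \leq 1+\log n$ gives a contribution of order $(T/n)\log n$. The $k=0$ term has to be handled separately because $t_0=0$ makes the bound singular: there one uses instead the trivial estimate $|E_x h(X_t)-E_x h(X_0)|\leq 2\|h\|$ on $[0,T/n]$, contributing $\leq 2\|h\|T/n$, which is lower order. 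Collecting the pieces yields $|E_x I_T(h)-E_x I_{T,n}(h)| \leq C\|h\|\,(\log n)/n$, absorbing $T$ into $C$.

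The main obstacle is really the analytic input rather than the combinatorics: one needs the time-derivative heat-kernel bound $\int_{\mathbb{R}^d}|\partial_t p_t(x,y)|\,dy \leq C/t$, which is where the Hölder-continuity and uniform-ellipticity hypotheses enter (via the parametrix construction of Friedman/Ladyzhenskaya-type, or equivalently via Aronson-type estimates for the derivative of the fundamental solution). Once that is granted, the rest is the two short steps above. I would also be slightly careful that the constant $c$ in the exponent may shrink when passing to the difference bound, but since we integrate the Gaussian in $y$ this is immaterial. It is worth noting that this argument is the model case of the paper's general approach: the diffusion structure is used only through the density estimate, exactly as the abstract promises.
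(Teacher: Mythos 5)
Your argument is correct and is essentially the paper's own method: the paper does not reprove Proposition~\ref{tv1} (it cites \cite{Gobet}), but its proof of Theorem~\ref{thm1}, specialized to $k=1$ and $f\equiv1$, uses exactly your scheme --- split off the first mesh interval, bound it trivially by $2\|h\|T/n$, and on the remaining intervals write $p_t-p_{\eta_n(t)}$ as a time integral of $\partial_u p_u$ and use $\int_{\mathbb{R}^d}|\partial_u p_u(x,y)|\,dy\leq C/u$ (condition \textbf{X}, which your Gaussian parametrix bound supplies for diffusions) to get the $(\log n)/n$ rate. So your proposal matches the paper's approach, with the diffusion structure entering only through the density estimate, just as intended.
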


The scheme of the proof of this result can be extended
straightforwardly to the case of arbitrary Markov process that
satisfies the following assumption (see Proposition 2.1 \cite{kul-gan}):

\begin{xx}
The process $X$ possesses a transition probability density
$p_t(x,y)$ that is differentiable with respect to $t$ and satisfies
\begin{equation}
\label{der_bound} \big|\partial_tp_t(x,y) \big|\leq
C_Tt^{-1} q_{t,x}(y), \quad t\leq T, \
C_T \geq1,
\end{equation}
for some measurable function $q$ such that for any fixed $t$ and $x$,
the function $q_{t,x}$ is a distribution density.
\end{xx}

\begin{remark}
A diffusion process satisfies condition \textbf{X} with
\[
q_{t,x}(y) \ = c_1 t^{-d/2} \exp
\bigl(-c_2t^{-1}|x-y|^2\bigr)
\]
and properly chosen $c_1, c_2$. The other examples of the processes
satisfying condition \eqref{der_bound} are provided in {\rm\cite
{kul-gan}}. Among them, we should mention an $\alpha$-stable process.
\end{remark}

Under assumption \textbf{X}, Proposition~\ref{tv1} and Proposition 2.1
in \cite{kul-gan} give bounds for the rate of approximation of \emph
{expectations} of the integral functionals of the process $X$. Such
approximation rates are called \emph{weak}. \emph{Strong} $L_p$-rates,
that is, the bounds for
\[
E_x \big|I_{T}(h)-I_{T,n}(h) \big|^p,
\]
have been recently obtained in \cite{Kohatsu-Higa} for diffusion
processes and in \cite{kul-gan} without restrictions on the structure
of the processes. In this paper, we provide another generalization of
the weak rate (\ref{lem}), namely, the rates of approximation for
expectations of more complicated functionals.
Let us formulate the main result of this paper.

\begin{thm}\label{thm1} Suppose that \textbf{X} holds. Then for each $k
\in\mathbb{N}$ and any bounded function~$f$,
\[
\big|E_x \bigl(I_{T}(h)\bigr)^k
f(X_T)- E_x\bigl(I_{T,n}(h)
\bigr)^kf(X_T) \big| \leq6k^2 C_T
T^{k} \|h\|^k \biggl({\log{n}\over n} \biggr) \|f
\|.
\]
\end{thm}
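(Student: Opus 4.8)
The plan is to reduce the estimate for the $k$-th power functional to the $k=1$ case (which is essentially Proposition~\ref{tv1}/Proposition 2.1 of \cite{kul-gan}) via a telescoping identity and a symmetrization of the time integrals. First I would write $(I_T(h))^k = k!\int_{0<t_1<\dots<t_k<T} h(X_{t_1})\cdots h(X_{t_k})\,dt_1\cdots dt_k$, and similarly express $(I_{T,n}(h))^k$ as $k!$ times a sum over ordered tuples of grid points (plus lower-order ``diagonal'' terms where some indices coincide, which contribute $O(T^k\|h\|^k/n)$ and are harmless). Then the difference $E_x[(I_T(h))^k f(X_T)] - E_x[(I_{T,n}(h))^k f(X_T)]$ can be split, by a standard telescoping argument over the $k$ coordinates, into $k$ terms, in each of which only a single time variable $t_j$ is replaced by its discretized version while the others are either already continuous or already discretized. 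Conditioning on the values of $X$ at all the other (already-fixed) time points and using the Markov property, each such term reduces to a one-dimensional discretization error of the form treated in the scalar case, now for the bounded function obtained by integrating out $f(X_T)$ and the remaining factors of $h$ against the transition densities.

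The key technical input is the scalar bound: for a bounded function $g$ and a single interval, $\big|\int_a^b E_x[g(X_t)]\,dt - (\text{grid sum})\big| \le C_T \|g\| \frac{\log n}{n}(b-a)$-type estimate, obtained exactly as in the proof behind \eqref{lem}, by using assumption \textbf{X}: on each subinterval $[kT/n,(k+1)T/n]$ one writes $E_x g(X_t) - E_x g(X_{kT/n}) = \int_{kT/n}^t \partial_s E_x g(X_s)\,ds$ and bounds $|\partial_s E_x g(X_s)| \le C_T s^{-1}\|g\|$ using \eqref{der_bound} together with the fact that $q_{s,x}$ integrates to $1$; integrating the $s^{-1}$ singularity over $[0,T]$ and over the partition produces the logarithmic factor. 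I would need a mild uniform-in-conditioning version of this: the auxiliary function $g$ that appears after conditioning (an iterated integral of $h$'s against $f(X_T)$) is bounded by $\|h\|^{k-1}T^{k-1}\|f\|$ up to combinatorial constants, and crucially its sup-norm is what enters, so no regularity of $h$ or $f$ is needed.

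The main obstacle is bookkeeping: controlling the combinatorial constant and making the telescoping uniform over the positions of the ``frozen'' time variables, since when several of the frozen variables cluster near the endpoints of the active subinterval the relevant transition-density bounds must still be applied cleanly. Concretely, after ordering $t_1<\dots<t_k$ one must handle the increments $X_{t_{j-1}}\to X_{t_j}\to X_{t_{j+1}}$ with $t_j$ being discretized, and the density bound \eqref{der_bound} has to be applied to the increment whose length is shrunk by discretization; ensuring the $t^{-1}$ singularities from possibly adjacent small gaps do not compound is the delicate point. I expect that, after symmetrizing so that one integrates each variable over the full $[0,T]$ (absorbing the factor $k!$ back), each of the $k$ telescoped terms is bounded by $k C_T T^{k} \|h\|^k \|f\| \frac{\log n}{n}$ up to a small absolute constant, and collecting the $k$ terms plus the diagonal corrections yields the stated constant $6k^2 C_T T^k \|h\|^k \|f\| \frac{\log n}{n}$. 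The factor $k^2$ (rather than $k$) is the price of the diagonal terms and of the crude bounding of the auxiliary sup-norms; I would not try to optimize it.
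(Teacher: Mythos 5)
Your plan follows essentially the same route as the paper's proof: the ordered-times (simplex) representation, telescoping replacement of one time variable at a time, writing each transition-density difference as $\int \partial_u p_u\,du$ and invoking condition \textbf{X} to get the $u^{-1}$ bound, and the near/far splitting of the time gap that produces the $1/n$ and $(\log n)/n$ contributions --- this is precisely the paper's $J_1,\dots,J_{2k}$ decomposition, with your two per-variable singular pieces merely grouped together (and the paper avoids your diagonal corrections by using the exact identity $I_{T,n}(h)=\int_0^T h(X_{\eta_n(t)})\,dt$, which makes the discretized simplex representation exact). The one caution is that your ``scalar'' key lemma cannot be quoted with a fixed bounded $g$: after conditioning, the discretized time $s_j$ enters the two adjacent transition densities, so the time derivative must be split by the product rule into two singular terms, each handled separately --- but since you explicitly flag this and the resolution you anticipate is the correct one, the outline is sound.
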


Clearly, Proposition 2.1 in \cite{kul-gan} is a particular case of
Theorem~\ref{thm1}. The latter statement is a substantial extension of
the former one: it contains both the moments of any order of the
integral functional and the value of the process in the final time
moment. Using the Taylor expansion, we obtain the following corollary
of Theorem~\ref{thm1}.

Consider any analytic function $g$ defined in a neighborhood of 0 and
constants $D_g,R_g>0$ such that $ |\frac{g^{(m)}(0)}{m!} | \leq
D_g  ( \frac{1}{R_g}  )^m$ for any natural $m$.

\begin{cor}\label{n1}
Suppose that \textbf{X} holds. Then for any bounded function $f$ and
a~function $h$ such that $T\|h\|< R_g$, we have:
\begin{equation}
\label{nas} \big|E_x g\bigl( I_{T}(h)\bigr)
f(X_T)- E_x g\bigl( I_{T,n}(h)
\bigr)f(X_T) \big| \leq C_{T,h,D_g,R_g} \biggl({\log{n}\over n}
\biggr) \|f\|,
\end{equation}
where
\[
C_{T,h,D_g,R_g}=6D_gC_T\frac{ T\|h\|}{R_g} \biggl(1+
\frac{ T\|h\|
}{R_g} \biggr) \frac{1}{ (1-\frac{ T\|h\|}{R_g} )^3}.
\]
\end{cor}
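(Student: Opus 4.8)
The plan is to expand $g$ in its Taylor series around $0$ and apply Theorem~\ref{thm1} term by term. Write $g(x)=\sum_{m\ge 0}a_m x^m$ with $a_m=g^{(m)}(0)/m!$, so that $|a_m|\le D_g R_g^{-m}$ by hypothesis; in particular the series has radius of convergence at least $R_g$. The first observation is that both arguments of $g$ stay inside this radius: since $\|h\|$ is the $\sup$-norm of $h$, we have $|I_T(h)|\le T\|h\|$ and likewise $|I_{T,n}(h)|\le (T/n)\cdot n\|h\|=T\|h\|$, while $T\|h\|<R_g$ by assumption. Consequently $g(I_T(h))$ and $g(I_{T,n}(h))$ are well defined, and the numerical series $\sum_{m\ge 0}|a_m|(T\|h\|)^m\le D_g\bigl(1-T\|h\|/R_g\bigr)^{-1}$ dominates both random power series uniformly.

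Next I would use this domination together with boundedness of $f$ to justify, via dominated convergence, the interchange of expectation and summation:
\[
E_x\, g\bigl(I_T(h)\bigr)f(X_T)=\sum_{m\ge 0}a_m\, E_x\bigl(I_T(h)\bigr)^m f(X_T),
\]
and the analogous identity with $I_{T,n}(h)$ in place of $I_T(h)$. Subtracting the two, the $m=0$ terms cancel, and the triangle inequality gives
\[
\bigl|E_x g(I_T(h))f(X_T)-E_x g(I_{T,n}(h))f(X_T)\bigr|\le\sum_{m\ge 1}|a_m|\,\bigl|E_x(I_T(h))^m f(X_T)-E_x(I_{T,n}(h))^m f(X_T)\bigr|.
\]
Now apply Theorem~\ref{thm1} with $k=m$ to each summand, bounding the $m$-th difference by $6m^2C_T T^m\|h\|^m(\log n/n)\|f\|$, and use $|a_m|\le D_g R_g^{-m}$.

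Finally, I would collect the constants. Writing $x:=T\|h\|/R_g\in[0,1)$, the estimate above becomes $6D_gC_T(\log n/n)\|f\|\sum_{m\ge 1}m^2 x^m$, so it only remains to evaluate the elementary power series $\sum_{m\ge 1}m^2 x^m=x(1+x)/(1-x)^3$, which reproduces exactly the constant $C_{T,h,D_g,R_g}$ in \eqref{nas}. There is no serious obstacle in this argument; the only points requiring a little care are verifying that the arguments of $g$ never leave the disc of convergence (so that the Taylor expansion is legitimate and its tail is controlled) and justifying the term-by-term passage to the expectation — both of which follow from the a priori bound $T\|h\|<R_g$ and the boundedness of $h$ and $f$.
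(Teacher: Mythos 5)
Your argument is exactly the one the paper intends (the paper only sketches it with the remark ``Using the Taylor expansion''): expand $g$ in its power series, bound $|I_T(h)|,|I_{T,n}(h)|\le T\|h\|<R_g$, interchange expectation and summation, apply Theorem~\ref{thm1} with $k=m$ to each term, and sum $\sum_{m\ge1}m^2x^m=x(1+x)/(1-x)^3$ to recover the stated constant. The proposal is correct and matches the paper's approach.
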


We provide the proof of Theorem~\ref{thm1} in Section~\ref{s2}. In
Section~\ref{s3}, we give an~application of Theorem~\ref{thm1} to
estimates of the rates of approximation of the Feynman--Kac semigroup
and of the price of an occupation-time option.

\section{Proof of Theorem~\ref{thm1}}\label{s2}

Denote
\[
S_{k,a,b}:= \bigl\{(s_1,s_2,\ldots,s_k)
\in\mathbb{R}^k | a\leq s_1 < s_2 < \cdots<
s_k \leq b\bigr\}, \quad k \in\mathbb{N}, \ a,b \in\mathbb{R},
\]
and for each $t\in[kT/n, (k+1)T/n)$, put $\eta_n(t):={kT\over
n}$.\vadjust{\eject}

We have:
\begin{align*}
&\frac{1}{k!} \bigl(E_x \bigl[\bigl(I_{T}(h)\bigr)^k-\bigl(I_{T,n}(h)\bigr)^k \bigr]f(X_T) \bigr)\\
&\quad = E_x \int_{S_{k,0,T}} \Biggl[\prod_{i=1}^k h(X_{s_i}) - \prod_{i=1}^k h(X_{\eta_n(s_i)}) \Biggr]f(X_T) \prod_{i=1}^k ds_{i}\\
&\quad = \int_{S_{k,0,T}}\int_{(\mathbb{R}^d)^{k+1}} \Biggl(\prod_{i=1}^kh(y_i) \Biggr)f(z)\Biggl(\prod_{i=1}^k p_{s_i-s_{i-1}}(y_{i-1},y_i)\Biggr)p_{T-s_k}(y_k,z)\\
&\qquad \times dz \prod_{j=1}^k dy_j\prod_{i=1}^k ds_{i} - \int_{S_{k,0,T}}\int_{(\mathbb{R}^d)^{k+1}} \Biggl(\prod_{i=1}^kh(y_i) \Biggr)f(z)\\
&\qquad \times \Biggl(\prod_{i=1}^k p_{\eta_n(s_i)-\eta_n(s_{i-1})}(y_{i-1},y_i) \Biggr) p_{T-\eta_n(s_k)}(y_k,z)dz \prod_{j=1}^k dy_j \prod_{i=1}^k ds_{i},
\end{align*}
where $s_0 = 0, \ y_0 = x$.

Rewrite the expression under the integral
\begin{align*}
&\Biggl(\prod_{i=1}^k p_{s_i-s_{i-1}}(y_{i-1},y_i)\Biggr) p_{T-s_k}(y_k,z)\\[-2pt]
&\quad - \Biggl( \prod_{i=1}^k
p_{\eta_n(s_i)-\eta_n(s_{i-1})}(y_{i-1},y_i) \Biggr) p_{T-\eta_n(s_k)}(y_k,z)
\end{align*}
in the form
\begin{align*}
&\Biggl(\prod_{i=1}^k p_{s_i-s_{i-1}}(y_{i-1},y_i)\Biggr) p_{T-s_k}(y_k,z)\\[-2pt]
&\qquad \mp p_{\eta_n(s_1)}(x,y_1) \Biggl(\prod_{i=2}^k p_{s_i-s_{i-1}}(y_{i-1},y_i)\Biggr)p_{T-s_k}(y_k,z)\\[-2pt]
&\qquad - \Biggl( \prod_{i=1}^k p_{\eta_n(s_i)-\eta_n(s_{i-1})}(y_{i-1},y_i) \Biggr) p_{T-\eta_n(s_k)}(y_k,z)\\[-2pt]
&\quad = \Biggl(\prod_{i=1}^k p_{s_i-s_{i-1}}(y_{i-1},y_i) \Biggr) p_{T-s_k}(y_k,z)\\[-2pt]
&\qquad \mp p_{\eta_n(s_1)}(x,y_1) \Biggl(\prod_{i=2}^k p_{s_i-s_{i-1}}(y_{i-1},y_i)\Biggr)p_{T-s_k}(y_k,z)\\[-2pt]
&\qquad \mp p_{\eta_n(s_1)}(x,y_1) p_{s_2-\eta_n(s_1)}(y_{1},y_2)\Biggl(\prod_{i=3}^k p_{s_i-s_{i-1}}(y_{i-1},y_i)\Biggr)p_{T-s_k}(y_k,z)\\[-2pt]
&\qquad - \Biggl( \prod_{i=1}^k p_{\eta_n(s_i)-\eta_n(s_{i-1})}(y_{i-1},y_i) \Biggr) p_{T-\eta_n(s_k)}(y_k,z)\\[-2pt]
&\quad \qquad \qquad \qquad \qquad \qquad \qquad \vdots\\[-2pt]
&\quad = J_1+J_2+\cdots+J_{2k-1}+J_{2k},
\end{align*}
where
\begin{align*}
J_1 &\,{=}\, \bigl(p_{s_1}(x,y_1)-p_{\eta_n(s_1)}(x,y_1)\bigr) \Biggl(\prod_{i=2}^k p_{s_i-s_{i-1}}(y_{i-1},y_i) \Biggr)p_{T-s_k}(y_k,z),\\
J_2 &\,{=}\, p_{\eta_n(s_1)}(x,y_1) \bigl(p_{s_2-s_1}(y_{1},y_2)- p_{s_2-\eta_n(s_1)}(y_{1},y_2)\bigr)\\
&\quad \times \Biggl(\prod_{i=3}^k p_{s_i-s_{i-1}}(y_{i-1},y_i) \Biggr)p_{T-s_k}(y_k,z),\\
J_3 &\,{=}\, p_{\eta_n(s_1)}(x,y_1) \bigl(p_{s_2-\eta_n(s_1)}(y_{1},y_2)- p_{\eta_n(s_2)-\eta_n(s_1)}(y_{1},y_2)\bigr)\\
&\quad \times \Biggl(\prod_{i=3}^k p_{s_i-s_{i-1}}(y_{i-1},y_i)\Biggr)p_{T-s_k}(y_k,z),\\
J_4 &\,{=}\, p_{\eta_n(s_1)}(x,y_1)p_{\eta_n(s_2)-\eta_n(s_1)}(y_{1},y_2)\bigl(p_{s_3-s_2}(y_{2},y_3) - p_{s_3-\eta_n(s_2)}(y_{2},y_3)\bigr)\\
&\quad \times \Biggl(\prod_{i=4}^k p_{s_i-s_{i-1}}(y_{i-1},y_i) \Biggr)p_{T-s_k}(y_k,z),\\
J_5 &\,{=}\, p_{\eta_n(s_1)}(x,y_1)p_{\eta_n(s_2)-\eta_n(s_1)}(y_{1},y_2)\bigl(p_{s_3-\eta_n(s_2)}(y_{2},y_3) \,{-} p_{\eta_n(s_3)-\eta_n(s_2)}(y_{2},y_3)\bigr)\\
&\quad \times \Biggl(\prod_{i=4}^k p_{s_i-s_{i-1}}(y_{i-1},y_i) \Biggr)p_{T-s_k}(y_k,z),\\
&\quad \qquad \qquad \qquad \qquad \qquad \qquad \vdots\\
J_{2k-1} &\,{=}\, \Biggl(\prod_{i=1}^{k-1}p_{\eta_n(s_i)-\eta_n(s_{i-1})}(y_{i-1},y_i) \Biggr)\\
&\quad \times\bigl(p_{s_k-\eta_n(s_{k-1})}(y_{k-1},y_k) - p_{\eta_n(s_k)-\eta_n(s_{k-1})}(y_{k-1},y_k)\bigr)p_{T-s_k}(y_k,z),\\
J_{2k} &\,{=}\, \Biggl(\prod_{i=1}^{k}p_{\eta_n(s_i)-\eta_n(s_{i-1})}(y_{i-1},y_i) \Biggr) \bigl(p_{T-s_k}(y_k,z)-p_{T-\eta_n(s_k)}(y_k,z)\bigr).
\end{align*}

Therefore,
\begin{align}
&\frac{1}{k!} \bigl(E_x \bigl[\bigl(I_{T}(h)\bigr)^k-\bigl(I_{T,n}(h)\bigr)^k \bigr]f(X_T) \bigr)\nonumber\\
&\!\quad = \!\!\int_{S_{k,0,T}}\int_{(\mathbb{R}^d)^{k+1}}\Biggl(\prod_{i=1}^kh(y_i)\Biggr)f(z) (J_1\,{+}\,J_2\,{+}\,\cdots\,{+}\,J_{2k-1}\,{+}\,J_{2k})dz\prod_{j=1}^k dy_j \prod_{i=1}^k ds_{i}.\label{sumJi}
\end{align}

Our way to estimate each of $2k$ terms in (\ref{sumJi}) is mostly the
same, but its realization is different for the first, the last, and the
intermediate terms. Let us estimate the first term in
(\ref{sumJi}):\vadjust{\eject}
\begin{align*}
&\Bigg|\int_{S_{k,0,T}}\int_{(\mathbb{R}^d)^{k+1}} \Biggl(\prod_{i=1}^kh(y_i) \Biggr)f(z)J_1 dz \prod_{j=1}^k dy_j \prod_{i=1}^k ds_{i} \Bigg|\\
&\quad = \Bigg|\int_0^T \int_{S_{k-1,s_1,T}}\int_{(\mathbb{R}^d)^{k+1}} \Biggl(\prod_{i=1}^kh(y_i)\Biggr)f(z) J_1 dz \prod_{j=1}^k dy_j \prod_{i=2}^k ds_{i} ds_1 \Bigg|\\
&\quad \leq \Bigg|\int_0^{T/n} \int_{S_{k-1,s_1,T}}\int_{(\mathbb{R}^d)^{k+1}} \Biggl(\prod_{i=1}^kh(y_i)\Biggr)f(z) J_1 dz \prod_{j=1}^k dy_j \prod_{i=2}^k ds_{i} ds_1 \Bigg|\\
&\qquad + \Bigg|\int_{T/n}^T \int_{S_{k-1,s_1,T}}\int_{(\mathbb{R}^d)^{k+1}} \Biggl(\prod_{i=1}^kh(y_i)\Biggr)f(z) J_1 dz \prod_{j=1}^k dy_j \prod_{i=2}^k ds_{i} ds_1 \Bigg|
\end{align*}
Let us consider each term in detail:
\begin{align*}
&\Bigg|\int_0^{T/n} \int_{S_{k-1,s_1,T}}\int_{(\mathbb{R}^d)^{k+1}} \Biggl(\prod_{i=1}^kh(y_i)\Biggr)f(z) J_1 dz \prod_{j=1}^k dy_j \prod_{i=2}^k ds_{i} ds_1 \Bigg|\\
&\quad \leq\int_0^{T/n} \int_{S_{k-1,0,T}}\int_{(\mathbb{R}^d)^{k+1}} \Bigg| \Biggl(\prod_{i=1}^kh(y_i)\Biggr)f(z) J_1 \Bigg| dz \prod_{j=1}^k dy_j \prod_{i=2}^k ds_{i} ds_1\\
&\quad \leq\|h\|^k\|f\| \int_0^{T/n} \int_{S_{k-1,0,T}}\int_{(\mathbb{R}^d)^{k+1}}|J_1| dz \prod_{j=1}^k dy_j \prod_{i=2}^k ds_{i} ds_1\\
&\quad \leq\frac{1}{(k-1)!} \|h\|^k\|f\| T^{k-1} \int_0^{T/n} \int_{\mathbb{R}^d}\big|p_{s_1}(x,y_1)-p_{\eta_n(s_1)}(x,y_1)\big|dy_1 ds_1\\
&\quad \leq\frac{2}{(k-1)!} \|h\|^k\|f\| T^{k}\frac{1}{n}.
\end{align*}
Next, we have
\begin{align*}
&\Bigg|\int_{T/n}^T \int_{S_{k-1,s_1,T}}\int_{(\mathbb{R}^d)^{k+1}} \Biggl(\prod_{i=1}^kh(y_i)\Biggr)f(z) J_1 dz \prod_{j=1}^k dy_j \prod_{i=2}^k ds_{i} ds_1 \Bigg|\\
&\quad = \Bigg|\int_{T/n}^T \int_{\eta_n(s_1)}^{s_1}\int_{S_{k-1,s_1,T}}\int_{(\mathbb{R}^d)^{k+1}} \Biggl(\prod_{i=1}^kh(y_i) \Biggr)f(z)\partial_u p_u (x,y_1)\\
&\qquad \times \Biggl(\prod_{i=2}^k p_{s_i-s_{i-1}}(y_{i-1},y_i) \Biggr) p_{T-s_k}(y_k,z)dz \prod_{j=1}^k dy_j \prod_{i=2}^k ds_{i} du ds_1 \Bigg|\\
&\quad \leq\int_{T/n}^T \int_{\eta_n(s_1)}^{s_1}\int_{S_{k-1,0,T}}\int_{(\mathbb{R}^d)^{k+1}} \Bigg| \Biggl(\prod_{i=1}^kh(y_i) \Biggr)f(z)\partial_u p_u (x,y_1)\\
&\qquad \times \Biggl(\prod_{i=2}^k p_{s_i-s_{i-1}}(y_{i-1},y_i) \Biggr) p_{T-s_k}(y_k,z)\Bigg| dz \prod_{j=1}^k dy_j \prod_{i=2}^k ds_{i}\times du ds_1\\
&\quad \leq\|h\|^k\|f\| \int_{T/n}^T \int_{\eta_n(s_1)}^{s_1} \int_{S_{k-1,0,T}}\int_{(\mathbb{R}^d)^{k+1}}\big|\partial_u p_u(x,y_1)\big| \Biggl(\prod_{i=2}^k p_{s_i-s_{i-1}}(y_{i-1},y_i) \Biggr)\\
&\qquad \times p_{T-s_k}(y_k,z) dz \prod_{j=1}^k dy_j \prod_{i=2}^k ds_{i} du ds_1.
\end{align*}

Integrating over $z, y_k, y_{k-1}, \dots, y_2$ and then over $s_k,
s_{k-1}, \dots, s_2$, we derive
\begin{align*}
&\Bigg|\int_{T/n}^T \int_{S_{k-1,s_1,T}}\int_{(\mathbb{R}^d)^{k+1}} \Biggl(\prod_{i=1}^kh(y_i)\Biggr)f(z) J_1 dz \prod_{j=1}^k dy_j \prod_{i=2}^k ds_{i} ds_1 \Bigg|\\
&\quad \leq\frac{1}{(k-1)!} \|h\|^k\|f\| T^{k-1} \int_{T/n}^T \int_{\eta_n(s_1)}^{s_1}\int_{\mathbb{R}^d}\big|\partial_u p_u(x,y_1)\big| dy_1 du ds_1\\
&\quad \leq C_T \frac{1}{(k-1)!} \|h\|^k\|f\| T^{k-1}\int_{T/n}^T \int_{\eta_n(s_1)}^{s_1}\int_{\mathbb{R}^d}u^{-1} q_{u,x}(y_1)dy_1 du ds_1\\
&\quad = C_T \frac{1}{(k-1)!} \|h\|^k\|f\| T^{k-1}\int_{T/n}^T \int_{\eta_n(s_1)}^{s_1}u^{-1} du ds_1\\
&\quad = C_T \frac{1}{(k-1)!} \|h\|^k\|f\| T^{k-1}\sum_{i=1}^{n-1} \int_{iT/n}^{(i+1)T/n}\int_{iT/n}^{s_1} u^{-1} du ds_1\\
&\quad = C_T \frac{1}{(k-1)!} \|h\|^k\|f\| T^{k-1}\sum_{i=1}^{n-1} \int_{iT/n}^{(i+1)T/n}\int_{u}^{(i+1)T/n} u^{-1} ds_1 du\\
&\quad \leq C_T \frac{1}{(k-1)!} \|h\|^k\|f\| T^{k}\frac{1}{n} \sum_{i=1}^{n-1} \int_{iT/n}^{(i+1)T/n} u^{-1} du\\
&\quad = C_T \frac{1}{(k-1)!} \|h\|^k\|f\| T^{k}\frac{1}{n} \int_{T/n}^{T} u^{-1} du\\
&\quad = C_T \frac{1}{(k-1)!} \|h\|^k\|f\| T^{k}\frac{\log n}{n}.
\end{align*}
Therefore,
\begin{align*}
&\Bigg|\int_{S_{k,0,T}}\int_{(\mathbb{R}^d)^{k+1}} \Biggl(\prod_{i=1}^kh(y_i) \Biggr)f(z)J_1 dz \prod_{j=1}^k dy_j \prod_{i=1}^k ds_{i} \Bigg|\\
&\quad \leq3 C_T \frac{1}{(k-1)!} \|h\|^k\|f\|T^{k} \frac{\log n}{n}.
\end{align*}
Now we are ready to estimate the last summand in (\ref{sumJi}):
\begin{align*}
&\Bigg|\int_{S_{k,0,T}}\int_{(\mathbb{R}^d)^{k+1}} \Biggl(\prod_{i=1}^kh(y_i) \Biggr)f(z)J_{2k} dz \prod_{j=1}^k dy_j \prod_{i=1}^k ds_{i} \Bigg|\\
&\quad = \Bigg|\int_0^T \int_{S_{k-1,0,s_k}}\int_{(\mathbb{R}^d)^{k+1}} \Biggl(\prod_{i=1}^kh(y_i)\Biggr)f(z) J_{2k} dz \prod_{j=1}^k dy_j \prod_{i=1}^{k-1} ds_{i} ds_k \Bigg|\\
&\quad \leq \Bigg|\int_0^{T-T/n} \int_{S_{k-1,0,s_k}}\int_{(\mathbb{R}^d)^{k+1}} \Biggl(\prod_{i=1}^kh(y_i)\Biggr)f(z) J_{2k} dz \prod_{j=1}^k dy_j \prod_{i=1}^{k-1} ds_{i} ds_k \Bigg|\\
&\qquad + \Bigg|\int_{T-T/n}^T \int_{S_{k-1,0,s_k}}\int_{(\mathbb{R}^d)^{k+1}} \Biggl(\prod_{i=1}^kh(y_i)\Biggr)f(z) J_{2k} dz \prod_{j=1}^k dy_j \prod_{i=1}^{k-1}ds_{i} ds_k \Bigg|
\end{align*}
Let us estimate each term separately. We get
\begin{align*}
&\Bigg|\int_{T-T/n}^T \int_{S_{k-1,0,s_k}}\int_{(\mathbb{R}^d)^{k+1}} \Biggl(\prod_{i=1}^kh(y_i)\Biggr)f(z) J_{2k} dz \prod_{j=1}^k dy_j \prod_{i=1}^{k-1}ds_{i} ds_k \Bigg|\\
&\quad \leq\int_{T-T/n}^T \int_{S_{k-1,0,T}}\int_{(\mathbb{R}^d)^{k+1}} \Bigg| \Biggl(\prod_{i=1}^kh(y_i)\Biggr)f(z) J_{2k} \Bigg| dz \prod_{j=1}^k dy_j \prod_{i=1}^{k-1}ds_{i} ds_k\\
&\quad \leq\|h\|^k\|f\| \int_{T-T/n}^T \int_{S_{k-1,0,T}}\int_{(\mathbb{R}^d)^{k+1}}|J_{2k}| dz \prod_{j=1}^k dy_j \prod_{i=1}^{k-1} ds_{i} ds_k\\
&\quad \leq\frac{2}{(k-1)!} \|h\|^k\|f\| T^{k}\frac{1}{n}.
\end{align*}
For the other term, we obtain:
\begin{align*}
&\Bigg|\int_0^{T-T/n} \int_{S_{k-1,0,s_k}}\int_{(\mathbb{R}^d)^{k+1}} \Biggl(\prod_{i=1}^kh(y_i)\Biggr)f(z) J_{2k} dz \prod_{j=1}^k dy_j \prod_{i=1}^{k-1}ds_{i} ds_k \Bigg|\\
&\quad = \Bigg|\int_0^{T-T/n} \int_{\eta_n(s_k)}^{s_k}\int_{S_{k-1,0,s_k}}\int_{(\mathbb{R}^d)^{k+1}} \Biggl(\prod_{i=1}^kh(y_i) \Biggr)f(z)\\
&\qquad \times \Biggl(\prod_{i=1}^{k}p_{\eta_n(s_i)-\eta_n(s_{i-1})}(y_{i-1},y_i) \Biggr) \partial_{T-u}p_{T-u} (y_k,z) dz \prod_{j=1}^k dy_j \prod_{i=1}^{k-1}ds_{i} du ds_k \Bigg|\\
&\quad \leq\int_0^{T-T/n} \int_{\eta_n(s_k)}^{s_k}\int_{S_{k-1,0,T}}\int_{(\mathbb{R}^d)^{k+1}} \Bigg| \Biggl(\prod_{i=1}^kh(y_i) \Biggr)f(z)\\
&\qquad \times \Biggl(\prod_{i=1}^{k}p_{\eta_n(s_i)-\eta_n(s_{i-1})}(y_{i-1},y_i) \Biggr) \partial_{T-u}p_{T-u} (y_k,z) \Bigg| dz \prod_{j=1}^k dy_j \prod_{i=1}^{k-1}ds_{i} du ds_k\\
&\quad \leq\|h\|^k\|f\| \int_0^{T-T/n} \int_{\eta_n(s_k)}^{s_k} \int_{S_{k-1,0,T}}\int_{(\mathbb{R}^d)^{k+1}} \Biggl(\prod_{i=1}^{k}p_{\eta_n(s_i)-\eta_n(s_{i-1})}(y_{i-1},y_i) \Biggr)\\
&\qquad \times\big|\partial_{T-u} p_{T-u} (y_k,z)\big| dz \prod_{j=1}^k dy_j \prod_{i=1}^{k-1} ds_{i} du ds_k.
\end{align*}

Let us rewrite this expression in the form\vspace{-3pt}
\begin{align*}
&\|h\|^k\|f\| \int_{S_{k-1,0,T}}\int_{(\mathbb{R}^d)^{k-1}}\Biggl(\prod_{i=1}^{k-1} p_{\eta_n(s_i)-\eta_n(s_{i-1})}(y_{i-1},y_i)\Biggr)\\
&\quad \times\int_0^{T-T/n} \int_{\eta_n(s_k)}^{s_k}\int_{(\mathbb{R}^d)^2} p_{\eta_n(s_k)-\eta_n(s_{k-1})}(y_{k-1},y_k)\\
&\quad \times\big|\partial_{T-u} p_{T-u} (y_k,z)\big| dz dy_k du ds_k \prod_{j=1}^{k-1}dy_j \prod_{i=1}^{k-1}ds_{i}
\end{align*}
and consider the inner integral
\begin{align*}
&\int_0^{T-T/n} \int_{\eta_n(s_k)}^{s_k}\int_{(\mathbb{R}^d)^2} p_{\eta_n(s_k)-\eta_n(s_{k-1})}(y_{k-1},y_k)\big|\partial_{T-u} p_{T-u} (y_k,z)\big| dz dy_k du ds_k\\
&\quad =\sum_{i=0}^{n-2} \int_{iT/n}^{(i+1)T/n}\int_{iT/n}^{s_k} \int_{(\mathbb{R}^d)^2}p_{iT/n-\eta_n(s_{k-1})}(y_{k-1},y_k)\\[1pt]
&\qquad \times\big|\partial_{T-u} p_{T-u} (y_k,z)\big| dz dy_k du ds_k\\[1pt]
&\quad =\sum_{i=0}^{n-2} \int_{iT/n}^{(i+1)T/n}\int_{u}^{(i+1)T/n} \int_{(\mathbb{R}^d)^2}p_{iT/n-\eta_n(s_{k-1})}(y_{k-1},y_k)\\[1pt]
&\qquad \times\big|\partial_{T-u} p_{T-u} (y_k,z)\big| dz dy_k ds_k du\\[1pt]
&\quad \leq\frac{T}{n} \sum_{i=0}^{n-2}\int_{iT/n}^{(i+1)T/n} \int_{(\mathbb{R}^d)^2}p_{iT/n-\eta_n(s_{k-1})}(y_{k-1},y_k)\big|\partial_{T-u}p_{T-u} (y_k,z)\big| dz dy_k du\\
&\quad \leq C_T \frac{T}{n} \sum_{i=0}^{n-2}\int_{iT/n}^{(i+1)T/n} \int_{\mathbb{R}^d}p_{iT/n-\eta_n(s_{k-1})}(y_{k-1},y_k) (T-u)^{-1}dy_k du\\
&\quad = C_T \frac{T}{n} \sum_{i=0}^{n-2}\int_{iT/n}^{(i+1)T/n} (T\,{-}\,u)^{-1} du \,{=}\, C_T \frac{T}{n} \int_{0}^{T-T/n}(T\,{-}\,u)^{-1} du \,{=}\, TC_T \frac{\log n}{n}.
\end{align*}
Therefore, we have:
\begin{align*}
&\Bigg|\int_0^{T-T/n} \int_{S_{k-1,0,s_k}}\int_{(\mathbb{R}^d)^{k+1}} \Biggl(\prod_{i=1}^kh(y_i)\Biggr)f(z) J_{2k} dz \prod_{j=1}^k dy_j \prod_{i=1}^{k-1}ds_{i} ds_k \Bigg|\\
&\quad \leq C_T \frac{1}{(k-1)!} \|h\|^k\|f\| T^{k}\frac{\log n}{n}
\end{align*}
and\vspace{-3pt}
\begin{align*}
&\Bigg|\int_{S_{k,0,T}}\int_{(\mathbb{R}^d)^{k+1}} \Biggl(\prod_{i=1}^kh(y_i) \Biggr)f(z)J_{2k} dz \prod_{j=1}^k dy_j \prod_{i=1}^k ds_{i} \Bigg|\\
&\quad \leq3 C_T \frac{1}{(k-1)!} \|h\|^k\|f\|T^{k} \frac{\log n}{n}.
\end{align*}
To complete the proof, we should additionally consider the following
terms in~(\ref{sumJi}):
\begin{align}
&\Bigg|\int_{S_{k,0,T}}\int_{(\mathbb{R}^d)^{k+1}} \Biggl(\prod_{i=1}^kh(y_i) \Biggr)f(z)\Biggl(\prod_{l=1}^{j-1} p_{\eta_n(s_l)-\eta_n(s_{l-1})}(y_{l-1},y_l)\Biggr)\nonumber\\
&\quad \times\bigl(p_{s_j-s_{j-1}}(y_{j-1},y_j)-p_{s_j-\eta_n(s_{j-1})}(y_{j-1},y_j)\bigr)\nonumber\\
&\quad \times \Biggl(\prod_{m={j+1}}^k p_{s_m-s_{m-1}}(y_{m-1},y_m) \Biggr)p_{T-s_k}(y_k,z)dz \prod_{q=1}^k dy_q \prod_{r=1}^k ds_{r} \Bigg|\label{j}
\end{align}
and
\begin{align}
&\Bigg|\int_{S_{k,0,T}}\int_{(\mathbb{R}^d)^{k+1}} \Biggl(\prod_{i=1}^kh(y_i) \Biggr)f(z)\Biggl(\prod_{l=1}^{j-1} p_{\eta_n(s_l)-\eta_n(s_{l-1})}(y_{l-1},y_l)\Biggr)\nonumber\\
&\quad \times\bigl(p_{s_j-\eta_n(s_{j-1})}(y_{j-1},y_j) - p_{\eta_n(s_j)-\eta_n(s_{j-1})}(y_{j-1},y_j)\bigr)\nonumber\\
&\quad \times \Biggl(\prod_{m={j+1}}^k p_{s_m-s_{m-1}}(y_{m-1},y_m) \Biggr)p_{T-s_k}(y_k,z)dz \prod_{q=1}^k dy_q \prod_{r=1}^k ds_{r} \Bigg|,\label{etaj}
\end{align}
where $j=\overline{2,k}$.

Consider (\ref{j}) in more detail. We rewrite it in the form
\begin{align*}
&\Bigg|\int_{0}^T\int_{S_{j-3,0,s_{j-2}}}\int_{s_{j-2}}^T\int_{s_{j-2}}^{s_j}\int_{S_{k-j,s_j,T}}\int_{(\mathbb{R}^d)^{k+1}} \Biggl(\prod_{i=1}^kh(y_i) \Biggr)f(z)\\
&\qquad \times \Biggl(\prod_{l=1}^{j-1}p_{\eta_n(s_l)-\eta_n(s_{l-1})}(y_{l-1},y_l) \Biggr)\bigl(p_{s_j-s_{j-1}}(y_{j-1},y_j) - p_{s_j-\eta_n(s_{j-1})}(y_{j-1},y_j)\bigr)\\
&\qquad \times \Biggl(\prod_{m={j+1}}^k p_{s_m-s_{m-1}}(y_{m-1},y_m) \Biggr)p_{T-s_k}(y_k,z)\\
&\qquad \times dz \prod_{q=1}^k dy_q \prod_{r=j+1}^k ds_{r} ds_{j-1}ds_j \prod_{v=1}^{j-3} ds_{v} ds_{j-2} \Bigg|\\
&\quad \leq \Bigg|\int_{0}^T\int_{S_{j-3,0,s_{j-2}}}\int_{s_{j-2}}^T\int_{s_{j-2}}^{s_j-T/n}\int_{S_{k-j,s_j,T}}\int_{(\mathbb{R}^d)^{k+1}} \Biggl(\prod_{i=1}^kh(y_i) \Biggr)f(z)\\
&\qquad \times \Biggl(\prod_{l=1}^{j-1}p_{\eta_n(s_l)-\eta_n(s_{l-1})}(y_{l-1},y_l) \Biggr)\bigl(p_{s_j-s_{j-1}}(y_{j-1},y_j) - p_{s_j-\eta_n(s_{j-1})}(y_{j-1},y_j)\bigr)\\
&\qquad \times \Biggl(\prod_{m={j+1}}^k p_{s_m-s_{m-1}}(y_{m-1},y_m) \Biggr)p_{T-s_k}(y_k,z)\\
&\qquad \times dz \prod_{q=1}^k dy_q \prod_{r=j+1}^k ds_{r} ds_{j-1}ds_j \prod_{v=1}^{j-3} ds_{v} ds_{j-2} \Bigg|\\
&\qquad + \Bigg|\int_{0}^T\int_{S_{j-3,0,s_{j-2}}}\int_{s_{j-2}}^T\int_{s_j-T/n}^{s_j}\int_{S_{k-j,s_j,T}}\int_{(\mathbb{R}^d)^{k+1}} \Biggl(\prod_{i=1}^kh(y_i) \Biggr)f(z)\\
&\qquad \times \Biggl(\prod_{l=1}^{j-1}p_{\eta_n(s_l)-\eta_n(s_{l-1})}(y_{l-1},y_l) \Biggr)\bigl(p_{s_j-s_{j-1}}(y_{j-1},y_j) - p_{s_j-\eta_n(s_{j-1})}(y_{j-1},y_j)\bigr)\\
&\qquad \times \Biggl(\prod_{m={j+1}}^k p_{s_m-s_{m-1}}(y_{m-1},y_m) \Biggr)p_{T-s_k}(y_k,z)\\
&\qquad \times dz \prod_{q=1}^k dy_q \prod_{r=j+1}^k ds_{r} ds_{j-1}ds_j \prod_{v=1}^{j-3} ds_{v} ds_{j-2} \Bigg|.
\end{align*}
We estimate each term separately:
\begin{align*}
&\Bigg|\int_{0}^T\int_{S_{j-3,0,s_{j-2}}}\int_{s_{j-2}}^T\int_{s_j-T/n}^{s_j}\int_{S_{k-j,s_j,T}}\int_{(\mathbb{R}^d)^{k+1}} \Biggl(\prod_{i=1}^kh(y_i) \Biggr)f(z)\\
&\qquad \times \Biggl(\prod_{l=1}^{j-1}p_{\eta_n(s_l)-\eta_n(s_{l-1})}(y_{l-1},y_l) \Biggr)\bigl(p_{s_j-s_{j-1}}(y_{j-1},y_j) - p_{s_j-\eta_n(s_{j-1})}(y_{j-1},y_j)\bigr)\\
&\qquad \times \Biggl(\prod_{m={j+1}}^k p_{s_m-s_{m-1}}(y_{m-1},y_m) \Biggr)p_{T-s_k}(y_k,z)\\
&\qquad \times dz \prod_{q=1}^k dy_q \prod_{r=j+1}^k ds_{r} ds_{j-1}ds_j \prod_{v=1}^{j-3} ds_{v} ds_{j-2} \Bigg|\\
&\quad \leq\|h\|^k\|f\|\\
&\qquad \times\int_{0}^T\int_{S_{j-3,0,s_{j-2}}}\int_{s_{j-2}}^T\int_{s_j-T/n}^{s_j}\int_{S_{k-j,s_j,T}} \int_{(\mathbb{R}^d)^{j}} \Biggl(\prod_{l=1}^{j-1} p_{\eta_n(s_l)-\eta_n(s_{l-1})}(y_{l-1},y_l)\Biggr)\\
&\qquad \times\big|p_{s_j-s_{j-1}}(y_{j-1},y_j) - p_{s_j-\eta_n(s_{j-1})}(y_{j-1},y_j)\big|\\
&\qquad \times\prod_{q=1}^j dy_q \prod_{r=j+1}^k ds_{r}ds_{j-1}ds_j \prod_{v=1}^{j-3}ds_{v} ds_{j-2}\\
&\quad \leq\frac{2}{(k-1)!} \|h\|^k\|f\| T^{k} \frac{1}{n}.
\end{align*}
For the other term, we have
\begin{align*}
&\Bigg|\int_{0}^T\int_{S_{j-3,0,s_{j-2}}}\int_{s_{j-2}}^T\int_{s_{j-2}}^{s_j-T/n}\int_{S_{k-j,s_j,T}}\int_{(\mathbb{R}^d)^{k+1}} \Biggl(\prod_{i=1}^kh(y_i) \Biggr)f(z)\\
&\qquad \times \Biggl(\prod_{l=1}^{j-1}p_{\eta_n(s_l)-\eta_n(s_{l-1})}(y_{l-1},y_l) \Biggr)\bigl(p_{s_j-s_{j-1}}(y_{j-1},y_j) - p_{s_j-\eta_n(s_{j-1})}(y_{j-1},y_j)\bigr)\\
&\qquad \times \Biggl(\prod_{m={j+1}}^k p_{s_m-s_{m-1}}(y_{m-1},y_m)\Biggr)p_{T-s_k}(y_k,z)\\
&\qquad \times dz \prod_{q=1}^k dy_q \prod_{r=j+1}^k ds_{r} ds_{j-1}ds_j \prod_{v=1}^{j-3} ds_{v} ds_{j-2}\Bigg|\\
&\quad = \Bigg|\int_{0}^T\int_{S_{j-3,0,s_{j-2}}}\int_{s_{j-2}}^T\int_{s_{j-2}}^{s_j-T/n}\int_{\eta_n(s_{j-1})}^{s_{j-1}} \int_{S_{k-j,s_j,T}}\int_{(\mathbb{R}^d)^{k+1}} \Biggl(\prod_{i=1}^kh(y_i)\Biggr)f(z)\\
&\qquad \times \Biggl(\prod_{l=1}^{j-1}p_{\eta_n(s_l)-\eta_n(s_{l-1})}(y_{l-1},y_l) \Biggr)\partial_{s_j-u}p_{s_j-u}(y_{j-1},y_j)\\
&\qquad \times \Biggl(\prod_{m={j+1}}^k p_{s_m-s_{m-1}}(y_{m-1},y_m) \Biggr) p_{T-s_k}(y_k,z)\\
&\qquad \times dz \prod_{q=1}^k dy_q \prod_{r=j+1}^k ds_{r} du ds_{j-1} ds_j \prod_{v=1}^{j-3} ds_{v}ds_{j-2} \Bigg|\\[1pt]
&\quad \leq\|h\|^k\|f\|\\[1pt]
&\qquad \times\int_{0}^T\!\!\int_{S_{j-3,0,s_{j-2}}}\int_{s_{j-2}}^T\int_{s_{j-2}}^{s_j-T/n}\!\int_{\eta_n(s_{j-1})}^{s_{j-1}} \int_{S_{k-j,s_j,T}} \int_{(\mathbb{R}^d)^{j}} \big|\partial _{s_j-u}p_{s_j-u}(y_{j-1},y_j)\big|\\
&\qquad \times \Biggl(\prod_{l=1}^{j-1}p_{\eta_n(s_l)-\eta_n(s_{l-1})}(y_{l-1},y_l) \Biggr) \prod_{q=1}^j dy_q \prod_{r=j+1}^k ds_{r} du ds_{j-1}ds_j \prod_{v=1}^{j-3}ds_{v}ds_{j-2}\\[1pt]
&\quad \leq\|h\|^k\|f\|\\[1pt]
&\qquad \times\int_{0}^T \!\!\int_{S_{j-3,0,s_{j-2}}}\int_{s_{j-2}}^T \int_{S_{k-j,s_j,T}} \int_{0}^{s_j-T/n}\! \int_{\eta_n(s_{j-1})}^{s_{j-1}}\int_{(\mathbb{R}^d)^{j}} \big|\partial_{s_j-u}p_{s_j-u}(y_{j-1},y_j)\big|\\
&\qquad \times \Biggl(\prod_{l=1}^{j-1}p_{\eta_n(s_l)-\eta_n(s_{l-1})}(y_{l-1},y_l) \Biggr) \prod_{q=1}^j dy_q du ds_{j-1}\prod_{r=j+1}^k ds_{r}ds_j \prod_{v=1}^{j-3}ds_{r} ds_{j-2}.
\end{align*}
Again, we consider the inner integral:
\begin{align*}
&\int_{0}^{\eta_n(s_j)-T/n} \int_{\eta_n(s_{j-1})}^{s_{j-1}}\int_{(\mathbb{R}^d)^2} p_{\eta_n(s_{j-1})-\eta_n(s_{j-2})}(y_{j-2},y_{j-1})\\[1pt]
&\qquad \times\big|\partial_{s_j-u}p_{s_j-u}(y_{j-1},y_j)\big|dy_j dy_{j-1} du ds_{j-1}\\[1pt]
&\quad =\sum_{i=0}^{\eta_n(s_j)n/T-2} \int_{iT/n}^{(i+1)T/n}\int_{iT/n}^{s_{j-1}} \int_{(\mathbb{R}^d)^2}p_{iT/n-\eta_n(s_{j-2})}(y_{j-2},y_{j-1})\\[1pt]
&\qquad \times\big|\partial_{s_j-u}p_{s_j-u}(y_{j-1},y_j)\big|dy_j dy_{j-1} du ds_{j-1}\\[1pt]
&\quad =\sum_{i=0}^{\eta_n(s_j)n/T-2} \int_{iT/n}^{(i+1)T/n}\int_{u}^{(i+1)T/n} \int_{(\mathbb{R}^d)^2}p_{iT/n-\eta_n(s_{j-2})}(y_{j-2},y_{j-1})\\[1pt]
&\qquad \times\big|\partial_{s_j-u}p_{s_j-u}(y_{j-1},y_j)\big|dy_j dy_{j-1} ds_{j-1}du\\[1pt]
&\quad \leq\frac{T}{n} \sum_{i=0}^{\eta_n(s_j)n/T-2}\int_{iT/n}^{(i+1)T/n} \int_{(\mathbb{R}^d)^2}p_{iT/n-\eta_n(s_{j-2})}(y_{j-2},y_{j-1})\\[1pt]
&\qquad \times\big|\partial_{s_j-u}p_{s_j-u}(y_{j-1},y_j)\big|dy_j dy_{j-1} du\\[1pt]
&\quad \leq C_T \frac{T}{n} \sum_{i=0}^{\eta_n(s_j)n/T-2}\int_{iT/n}^{(i+1)T/n} \int_{\mathbb{R}^d}p_{iT/n-\eta_n(s_{j-2})}(y_{j-2},y_{j-1}) (s_j-u)^{-1}dy_{j-1} du\\[1pt]
&\quad = C_T \frac{T}{n} \sum_{i=0}^{\eta_n(s_j)n/T-2}\int_{iT/n}^{(i+1)T/n} (s_j-u)^{-1} du = C_T \frac{T}{n} \int_{0}^{\eta_n(s_j)-T/n}(s_j-u)^{-1} du.
\end{align*}
We have
\begin{align*}
&\int_{0}^{s_j-T/n} \int_{\eta_n(s_{j-1})}^{s_{j-1}}\int_{(\mathbb{R}^d)^2} p_{\eta_n(s_{j-1})-\eta_n(s_{j-2})}(y_{j-2},y_{j-1})\\
&\qquad \times\big|\partial_{s_j-u}p_{s_j-u}(y_{j-1},y_j)\big|dy_j dy_{j-1} du ds_{j-1}\\
&\quad \leq C_T \frac{T}{n} \int_{0}^{s_j-T/n}(s_j-u)^{-1} du \leq TC_T \frac{\log n}{n}.
\end{align*}
Therefore, we obtain
\begin{align*}
&\Bigg|\int_{S_{k,0,T}}\int_{(\mathbb{R}^d)^{k+1}} \Biggl(\prod_{i=1}^kh(y_i) \Biggr)f(z)\Biggl(\prod_{l=1}^{j-1} p_{\eta_n(s_l)-\eta_n(s_{l-1})}(y_{l-1},y_l)\Biggr)\\
&\qquad \times\bigl(p_{s_j-s_{j-1}}(y_{j-1},y_j) - p_{s_j-\eta_n(s_{j-1})}(y_{j-1},y_j)\bigr) \Biggl(\prod_{m={j+1}}^k p_{s_m-s_{m-1}}(y_{m-1},y_m)\Biggr)\\
&\qquad \times p_{T-s_k}(y_k,z) dz \prod_{q=1}^k dy_q \prod_{r=1}^k ds_{r} \Bigg|\\
&\quad \leq3 C_T\frac{1}{(k-1)!} \|h\|^k\|f\| T^{k} \frac{\log n}{n}.
\end{align*}
Analogously, we also have:
\begin{align*}
&\Bigg|\int_{S_{k,0,T}}\int_{(\mathbb{R}^d)^{k+1}} \Biggl(\prod_{i=1}^kh(y_i) \Biggr)f(z)\Biggl(\prod_{l=1}^{j-1} p_{\eta_n(s_l)-\eta_n(s_{l-1})}(y_{l-1},y_l)\Biggr)\\
&\qquad \times\bigl(p_{s_j-\eta_n(s_{j-1})}(y_{j-1},y_j) - p_{\eta_n(s_j)-\eta_n(s_{j-1})}(y_{j-1},y_j)\bigr)\\
&\qquad \times \Biggl(\prod_{m={j+1}}^k p_{s_m-s_{m-1}}(y_{m-1},y_m) \Biggr)p_{T-s_k}(y_k,z)dz \prod_{q=1}^k dy_q \prod_{r=1}^k ds_{r} \Bigg|\\
&\quad \leq3 C_T \frac{1}{(k-1)!} \|h\|^k\|f\|T^{k} \frac{\log n}{n}.
\end{align*}
Therefore, we finally obtain
\[
\big|E_x \bigl[\bigl(I_{T}(h)\bigr)^k-
\bigl(I_{T,n}(h)\bigr)^k \bigr] f(X_T) \big|
\leq6k^2 C_T T^{k} \|h\|^k \biggl(
{\log{n}\over n} \biggr) \|f\|,
\]
which completes the proof.
\qed

\section{Applications}\label{s3}

\subsection{Discrete approximation of the Feynman--Kac semigroup}

Let $X$ be a Brownian motion with values in $\mathbb{R}^d$. Then
condition \textbf{X} holds with
\[
q_{t,x}(y) = c_1 t^{-d/2} \exp
\bigl(-c_2t^{-1}|x-y|^2\bigr),
\]
where $c_1, c_2$ are some positive constants.

Let $h$ be a bounded measurable function. Then, it is known (see, e.g.,
\cite{Sznitman}, Chapter~1) that the family of operators
\[
R_t^h f(x) = E_x \bigl[f(X_t)
\exp \bigl\{\lambda I_t(h) \bigr\} \bigr]
\]
forms a semigroup on $L_p(\mathbb{R}^d), \ p\geq1$, and its generator equals
\[
\mathcal{A}_h f= {1\over2}\Delta f + \lambda h f.
\]
This semigroup is called the Feynman--Kac semigroup.

Denote
\[
R_{t,n}^h f(x) = E_x \bigl[f(X_t)
\exp \bigl\{\lambda I_{t,n}(h) \bigr\} \bigr].
\]
Then, using the Taylor expansion of the exponential function and
Theorem~\ref{thm1}, we have the following statement.
\begin{cor}\label{n2}
For any bounded functions $f,h$ and real positive number $\lambda$, we have:
\[
\big|R_{t}^h f(x) - R_{t,n}^h f(x) \big|
\leq C_{T,\lambda,h} \biggl({\log{n}\over n} \biggr) \|f\|,
\]
where
\[
C_{T,\lambda,h}=6C_T\lambda\|h\|T \big(1+\lambda\|h\|T\big) \exp\big\{\lambda
\|h\| T\big\}.
\]
\end{cor}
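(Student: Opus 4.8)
The plan is to expand the exponential in its Taylor series, apply Theorem~\ref{thm1} power by power, and sum the resulting bounds. First I would record the pointwise estimates
\[
\bigl|I_t(h)\bigr|\le t\|h\|\le T\|h\|,\qquad \bigl|I_{t,n}(h)\bigr|=\Bigl|\frac{t}{n}\sum_{k=0}^{n-1}h(X_{kt/n})\Bigr|\le t\|h\|\le T\|h\|,
\]
where I use $t\le T$ (forced by the appearance of $T$ in the claimed constant; condition \textbf{X} then holds on $[0,t]$ with the same constant $C_T$). These bounds give $\sum_{k\ge0}\frac{\lambda^k}{k!}|I_t(h)|^k=e^{\lambda|I_t(h)|}\le e^{\lambda T\|h\|}$, and likewise with $I_{t,n}(h)$ in place of $I_t(h)$, so both exponential series converge absolutely with a common constant majorant for the absolute values. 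Hence $E_x$ may be interchanged with the summation, and since the $k=0$ terms cancel,
\[
R_t^hf(x)-R_{t,n}^hf(x)=\sum_{k=1}^\infty\frac{\lambda^k}{k!}\,E_x\bigl[\bigl((I_t(h))^k-(I_{t,n}(h))^k\bigr)f(X_t)\bigr].
\]

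Next I would estimate each summand by Theorem~\ref{thm1} (applied on $[0,t]$, with constant $C_T$), obtaining
\[
\Bigl|E_x\bigl[\bigl((I_t(h))^k-(I_{t,n}(h))^k\bigr)f(X_t)\bigr]\Bigr|\le 6k^2C_T t^k\|h\|^k\,\frac{\log n}{n}\,\|f\|\le 6k^2C_T T^k\|h\|^k\,\frac{\log n}{n}\,\|f\|.
\]
Summing over $k\ge1$, writing $\rho:=\lambda\|h\|T$, and using the elementary identity $\sum_{k\ge1}\frac{k^2\rho^k}{k!}=\rho(1+\rho)e^{\rho}$ (a direct computation, e.g.\ by applying $\rho\,\partial_\rho$ twice to $e^\rho$), I conclude
\[
\bigl|R_t^hf(x)-R_{t,n}^hf(x)\bigr|\le 6C_T\,\frac{\log n}{n}\,\|f\|\sum_{k\ge1}\frac{\rho^k k^2}{k!}=6C_T\,\lambda\|h\|T\bigl(1+\lambda\|h\|T\bigr)e^{\lambda\|h\|T}\,\frac{\log n}{n}\,\|f\|,
\]
which is exactly the asserted bound with the stated constant $C_{T,\lambda,h}$.

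The argument involves no real obstacle beyond bookkeeping; the one step that deserves explicit justification is the interchange of $E_x$ with the infinite sum, which is why I would establish the uniform bound $T\|h\|$ on $I_t(h)$ and $I_{t,n}(h)$ at the outset. I note in passing that the corollary is formally a special case of Corollary~\ref{n1} with $g(z)=e^{\lambda z}$ (for which $|g^{(m)}(0)/m!|=\lambda^m/m!\le e^{\lambda R_g}R_g^{-m}$ for every $R_g>T\|h\|$), but handling the exponential series directly avoids the optimization over $R_g$ and yields the sharp, fully explicit constant above.
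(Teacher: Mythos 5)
Your proof is correct and follows the same route the paper indicates: Taylor expansion of the exponential, termwise application of Theorem~\ref{thm1}, and summation via $\sum_{k\ge1}k^2\rho^k/k!=\rho(1+\rho)e^{\rho}$, which reproduces the stated constant $C_{T,\lambda,h}$ exactly. The justification of the interchange of $E_x$ with the series via the uniform bound $T\|h\|$ is a welcome detail the paper leaves implicit.
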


Therefore, the main result of this paper provides an approximation of
the\break Feynman--Kac semigroup with accuracy $(\log{n})/ n$.

\subsection{Approximation of the price of an occupation-time option}

Let the price of an asset $S=\{S_t, t\geq0\}$ be of the form
\[
S_t = S_0 \exp(X_t),
\]
where $X$ is a one-dimensional Markov process satisfying condition
\textbf{X}.
The time spent by $S$ in a defined set $J\subset\mathbb{R}$ (or the
time spent by $X$ in a set $J'=\{x:e^x\in J\}$) from time 0 to time $T$
is given by
\[
\int_0^T \mathbb{I}_{\{S_t \in J\}} dt = \int
_0^T \mathbb{I}_{\{X_t \in
J'\}} dt.
\]

We consider an occupation-time option (see \cite{Linetsky}) whose price
depends on the time spent by the process $S$ in a set $J$. In contrast
to the traditional barrier options, which are activated or canceled
when the process $S$ hits a defined level (barrier), the payoff of an
occupation-time option depends on the time spent by the price of the
asset above/below this level.

For the strike price $K$, the barrier $L$, and the knock-out rate $\rho
$, the payoff of a down-and-out call occupation-time option is given by
\[
\exp \Biggl( - \rho\int_0^T
\mathbb{I}_{\{S_t \leq L\}} dt \Biggr) (S_T - K)_+.
\]
Then, for the risk-free interest rate $r$, its price is given by
\[
\textbf{C}(T) = \exp(-rT) E \Biggl[ \exp \Biggl( - \rho\int_0^T
\mathbb {I}_{\{S_t \leq L\}} dt \Biggr) (S_T - K)_+ \Biggr].
\]

Denote
\[
\textbf{C}_n(T) = \exp(-rT) E \Biggl[ \exp \Biggl( - \rho T/n \sum
_{k=0}^{n-1} \mathbb{I}_{\{S_{kT/n} \leq L\}} dt
\Biggr) (S_T - K)_+ \Biggr].
\]
We provide the following corollary of Theorem~\ref{thm1}.
\begin{prop}
Suppose that \textup{\textbf{X}} holds and there exists $u>1$ such that $G:=\break E
\exp(u X_T) = E S_T^u < + \infty$. Then
\[
\big|\textup{\textbf{C}}_n(T)- \textup{\textbf{C}}(T) \big| \leq3\max\{C_{T,\rho}, G
\} \exp(-rT) \biggl({\log{n}\over n^{1-1/u}} \biggr),
\]
where $C_{T,\rho}=6C_T\rho T (1+\rho T) \exp(\rho T)$.
\end{prop}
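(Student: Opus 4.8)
The plan is to reduce the occupation-time option estimate to an application of Corollary~\ref{n1} combined with a truncation argument that handles the unbounded payoff $(S_T-K)_+$. First I would set $g(x)=e^{-x}$, which is entire with $g^{(m)}(0)/m!=(-1)^m/m!$, so one may take $D_g=1$ and $R_g$ arbitrarily large; in particular Corollary~\ref{n1} applies to $g(I_T(h))$ with $h=\rho\,\mathbb{I}_{J'}$ whenever $T\|h\|=\rho T<R_g$, and since $R_g$ is unrestricted the hypothesis is automatic. The constant produced by Corollary~\ref{n1}, after optimizing, is of the order $C_{T,\rho}=6C_T\rho T(1+\rho T)\exp(\rho T)$, which is exactly the constant appearing in the statement. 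The difficulty is that the ``final-value'' function $f$ in Corollary~\ref{n1} must be \emph{bounded}, whereas here the role of $f$ is played by $(S_T-K)_+=(S_0e^{X_T}-K)_+$, which is unbounded.

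To deal with this I would split the payoff at a level depending on $n$. Fix a threshold $N=N_n$ and write $(S_T-K)_+=(S_T-K)_+\mathbb{I}_{\{S_T\le N\}}+(S_T-K)_+\mathbb{I}_{\{S_T> N\}}$. On the first piece the function $f_N(x):=(S_0e^x-K)_+\mathbb{I}_{\{S_0e^x\le N\}}$ is bounded by $N$, so Corollary~\ref{n1} (applied under $E=E_x$ with the relevant initial condition, then integrated in the initial law if needed) gives
\[
\big|E\,g(I_T(h))f_N(X_T)-E\,g(I_{T,n}(h))f_N(X_T)\big|\le C_{T,\rho}\,N\,\frac{\log n}{n}.
\]
On the second piece I use that $0\le g(I_T(h))\le 1$ and $0\le g(I_{T,n}(h))\le 1$ (since $I_T(h),I_{T,n}(h)\ge 0$ because $h=\rho\mathbb{I}_{J'}\ge 0$), so the contribution of the tail to both $\mathbf{C}(T)$ and $\mathbf{C}_n(T)$ is at most $E\big[(S_T-K)_+\mathbb{I}_{\{S_T>N\}}\big]\le E\big[S_T\,\mathbb{I}_{\{S_T>N\}}\big]$. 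By the integrability hypothesis $G=E\,S_T^u<\infty$ with $u>1$, a Markov/Chebyshev bound gives $E\big[S_T\,\mathbb{I}_{\{S_T>N\}}\big]\le N^{-(u-1)}E\,S_T^u=G\,N^{1-u}$.

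Combining, $|\mathbf{C}_n(T)-\mathbf{C}(T)|\le e^{-rT}\big(C_{T,\rho}N\tfrac{\log n}{n}+2G N^{1-u}\big)$, and the main remaining step is to choose $N=N_n$ to balance the two terms. Taking $N_n\asymp n^{1/u}$ makes both terms of order $n^{-(1-1/u)}$, up to the $\log n$ factor carried by the first term; a clean choice such as $N_n=n^{1/u}$ yields
\[
\big|\mathbf{C}_n(T)-\mathbf{C}(T)\big|\le 3\max\{C_{T,\rho},G\}\,e^{-rT}\,\frac{\log n}{n^{1-1/u}},
\]
after absorbing the constants $1,2$ and the $\log n\ge1$ into the factor $3$ and the maximum. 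The only genuinely delicate point is bookkeeping the constants so that the factor $3\max\{C_{T,\rho},G\}$ comes out exactly as stated; everything else is the truncation estimate above plus a direct invocation of Corollary~\ref{n1}. I would also note in passing that $I_{T,n}(h)=\tfrac{T}{n}\sum_{k=0}^{n-1}\rho\,\mathbb{I}_{\{S_{kT/n}\le L\}}$ matches the expression inside $\mathbf{C}_n(T)$ (the stray ``$dt$'' in the definition of $\mathbf{C}_n(T)$ is a typo), so no further reconciliation is needed.
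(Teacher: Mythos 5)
Your overall strategy is exactly the paper's: truncate the payoff at a level $N$, estimate the truncated part by the weak rate for the exponential functional with $\|f\|\le N$, bound the two tail contributions by $2GN^{1-u}$ via Markov's inequality, and choose $N=n^{1/u}$ to balance, absorbing the constants into $3\max\{C_{T,\rho},G\}$. The one genuine flaw is in how you obtain the bounded-part estimate. You invoke Corollary~\ref{n1} with $g(x)=e^{-x}$, claiming one may take $D_g=1$ and $R_g$ arbitrarily large; but the hypothesis $|g^{(m)}(0)/m!|=1/m!\le D_g R_g^{-m}$ with $D_g=1$ forces $R_g\le 1$ (otherwise it fails for $m=1$), so the parameter choice is inadmissible, and in any case the constant of Corollary~\ref{n1} carries the geometric-series factor $(1-T\|h\|/R_g)^{-3}$ and does not ``optimize'' to $C_{T,\rho}=6C_T\rho T(1+\rho T)e^{\rho T}$. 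The correct route --- and the one the paper takes --- is to use Corollary~\ref{n2} (equivalently, apply Theorem~\ref{thm1} to the Taylor expansion of the exponential directly), since
\[
\sum_{k\ge1}\frac{(\rho T)^k}{k!}\,6k^2C_T\;=\;6C_T\,\rho T(1+\rho T)e^{\rho T},
\]
which yields precisely the stated constant with $h=\mathbb{I}_{J'}$, $\lambda=\rho$, $\|h\|\le1$. With that substitution (and noting, as the paper implicitly does, that the Feynman--Kac-type bound of Corollary~\ref{n2} holds for any process satisfying \textbf{X}, not only Brownian motion), your truncation argument and the choice $N=n^{1/u}$ reproduce the paper's proof; the rest of your bookkeeping ($\log n\ge1$, factor $3$, the maximum) is fine.
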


\begin{proof}

For some $N>0$, we denote
\[
\textbf{C}^N(T) = \exp(-rT) E \Biggl[ \exp \Biggl( - \rho\int
_0^T \mathbb{I}_{\{S_t \leq L\}} dt \Biggr)
\bigl((S_T - K)_+ \wedge N\bigr) \Biggr],
\]
\[
\textbf{C}^N_n(T) = \exp(-rT) E \Biggl[ \exp \Biggl( -
\rho T/n \sum_{k=0}^{n-1}
\mathbb{I}_{\{S_{kT/n} \leq L\}} dt \Biggr) \bigl((S_T - K)_+ \wedge N\bigr)
\Biggr].
\]
Then
\[
\big|\textbf{C}_n(T)- \textbf{C}(T) \big| \leq \big|\textbf{C}^N_n(T)-
\textbf{C}^N(T) \big| + \big|\textbf{C}(T)- \textbf{C}^N(T) \big| +
\big|\textbf{C}_n(T)- \textbf{C}^N_n(T) \big|.
\]
We estimate each term separately. According to Corollary~\ref{n2},
\[
\big|\textbf{C}^N_n(T)- \textbf{C}^N(T) \big|
\leq NC_{T,\rho} \exp (-rT) \biggl({\log{n}\over n} \biggr).
\]
For other terms, we have:
\begin{align*}
&\big|\textbf{C}(T)- \textbf{C}^N(T) \big| + \big|\textbf{C}_n(T)-\textbf {C}^N_n(T) \big|\\
&\quad \leq2 \exp(-rT) E \bigl[(S_T - K)_+ -(S_T - K)_+ \wedge N \bigr] \leq 2 \exp(-rT) E [S_T \mathbb{I}_{\{S_T>N\}}]\\
&\quad = 2 \exp(-rT) E \biggl[ \frac{S_T N^{u-1}\mathbb{I}_{\{S_T>N\}}}{N^{u-1}} \biggr] \leq\frac{2G}{N^{u-1}}\exp(-rT).
\end{align*}

Now, putting $N = n^{1/u}$ completes the proof.
\end{proof}

Therefore, the main result of this paper provides the approximate value
$\textbf{C}_n(T)$ of the price of an occupation-time option $\textbf
{C}(T)$ with accuracy of order $(\log n) / n^{1-1/u}$ for the class of
processes $X$ satisfying \textbf{X} and the condition $E \exp(u X_T) <
+ \infty$ for some $u>1$.

\section*{Acknowledgments}

The authors are deeply grateful to Arturo Kohatsu-Higa for discussion
and valuable suggestions about the possible area of applications of the
main result of the paper.

\bibliographystyle{vmsta-mathphys}
%

\end{document}